
\documentclass{birkjour}
%
%
%
 \newtheorem{thm}{Theorem}[section]
 \newtheorem{cor}[thm]{Corollary}
 \newtheorem{lem}[thm]{Lemma}
 
 \theoremstyle{definition}
 \newtheorem{defn}[thm]{Definition}
 \theoremstyle{remark}
 \newtheorem{rem}[thm]{Remark}
 \newtheorem*{ex}{Example}
 \numberwithin{equation}{section}

\begin{document}

%
%
%
%
%
%
%
%
%

\title[On a type of Static Equation on Certain Contact Metric Manifolds]
 {On a type of Static Equation on Certain Contact Metric Manifolds}

\author[Mohan Khatri]{Mohan Khatri}

\address{%
Department of Applied Mathematics and Statistics\\
Pachhunga University College\\
Aizawl-796001\\
India}

\email{mohankhatri.official@gmail.com}

\author{Jay Prakash Singh}
\address{%
Department of Mathematics\\
Central University of South Bihar\\
Gaya-824236, Bihar\\ India}
\email{jpsmaths@gmail.com}
\subjclass{Primary 53C15; Secondary 53C20, 53C25}

\keywords{Static space-time; $K$-contact manifold; Miao-Tam equation; critical point equation; Einstein manifold}

\date{January 1, 2004}

\begin{abstract}
This paper deals with the investigation of $K$-contact and $(\kappa,\mu)$-contact manifolds admitting a positive smooth function $f$ satisfying the equation: $$f\mathring{Ric}=\mathring{\nabla}^2f$$ where $\mathring{Ric}$, $\mathring{\nabla}^2f$ are traceless Ricci tensor and Hessian tensor respectively. We proved that if a complete and simply connected $K$-contact manifold admits such a smooth function $f$, then it is isometric to the unit sphere $\mathbb{S}^{2n+1}$. Next, we showed that if a non-Sasakian $(\kappa,\mu)$-contact metric manifold admit such a smooth function $f$, then it is locally flat for $n=1$ and for $n>1$ is locally isometric to the product space $E^{n+1}\times S^n(4)$.
\end{abstract}

\maketitle
\section{Introduction}
Static space-times are special and important global solutions to Einstein equations which show the interplay between matter and space-time in general relativity. A static space-time metric $\hat{g}=-f^2dt^2+g$ satisfying the Einstein equation
\begin{eqnarray}\label{1.1}
Ric_{\hat{g}}-\frac{R_{\hat{g}}}{2}=T,
\end{eqnarray}
for the energy-momentum stress tensor $T=-\mu f^2dt^2-\rho g$ of a perfect fluid, where the smooth functions $\mu$ and $\rho$ are the energy density and pressure of the perfect fluid respectively. Moreover, $Ric_{\hat{g}}$ and $R_{\hat{g}}$ stand for Ricci tensor and the scalar curvature for the metric $\hat{g}$. For details see \cite{1,2,3,4}. The Friedmann-Lemaitre-Robertson-Walker solutions for the Einstein equation with perfect fluid as a matter fluid represents a homogeneous, fluid filled universe that is undergoing accelerated expansion \cite{5}. Therefore static perfect fluid space-times acts as a generalizations of the static vacuum spaces and hence is an important topic of study both for physicians and mathematicians.\\

We now recall the definition of static perfect fluid space-time (see \cite{6,3,7}).
\begin{defn}
A Riemannian manifold $(M^n,g)$ is said to be the spatial factor of a static perfect fluid space-time if there exist smooth functions $f>0$ and $\rho$ on $M^n$ satisfying the perfect fluid equations:
\begin{eqnarray}\label{1.2}
f\mathring{Ric}=\mathring{\nabla}^2f
\end{eqnarray}
and 
\begin{eqnarray}\label{1.3}
\Delta f=\Big(\frac{n-2}{2(n-1)}R+\frac{n}{n-1}\rho \Big)f,
\end{eqnarray}
where $\mathring{Ric}$ and $\mathring{\nabla}^2$ stand for the traceless Ricci and Hessian tensors, respectively. When $M^n$ has boundary $\partial M$, we assume in addition that $f^{-1}(0)=\partial M$.
\end{defn}
Coutinho et al. \cite{6} obtained a sharp boundary estimate for static perfect fluid space-time on an $n$-dimensional compact manifold with boundary. Kobayashi and Obata \cite{3} gave a classification of conformally flat Riemannian manifold in which $(g,f)$ satisfies (\ref{1.2}). Recently, Leandro and Sol$\acute{o}$rzano \cite{8} investigate the static perfect fluid space-time $M^4\times_f\mathbb{R}$ such that $(M^4,g)$ is a half conformally flat Riemannian manifold and proved that $(M^4,g)$ is locally isometric to a warped product manifold $I\times_\phi N^3$ where $I\subset\mathbb{R}$ and $N^3$ is a space form. As illustrated by Coutinho et al. \cite{6} Schwarzchild space, which serves as a model for a static black hole, is an exact solution of the above equations. 

Let $(M^n,g)$ be a compact, oriented, connected Riemannian manifold with dimension $n$ at least three, $\mathcal{M}$ be the set of Riemannian metrics on $M^n$ of unitary volume, $\mathcal{C}\subset\mathcal{M}$ be the set of Riemannian metrics with constant scalar curvature. Define the total scalar curvature functional $\mathcal{R}:\mathcal{M}\rightarrow\mathbb{R}$ as $$\mathcal{R}(g)=\int_{M^n}R_gdM_g,$$ where $R_g$ is the scalar curvature on $M^n$. It is well-known that the formal $L^2$-adjoint of the linearization of the scalar curvature operator $\mathfrak{L}_g$ at $g$ is defined as $$\mathfrak{L}_g^*(f):=-(\Delta_gf)g+Hess_gf-fRic_g,$$ where $f$ is a smooth function on $M^n$, and $\Delta_g$, $Hess_g$ and $Ric_g$ stand for the Laplacian, the Hessian form and the Ricci curvature tensor on $M^n$, respectively. As in \cite{9,10} we say that $g$ is a $V$-static metric if there exists a smooth function $f$ on $M^n$ and a constant $\kappa$ satisfying 
\begin{eqnarray}\label{1.4}
\mathfrak{L}_g^*(f)=\kappa g.
\end{eqnarray}
Coutinho et al. \cite{6} consider Riemannian manifolds satisfying (\ref{1.2}) and obtained several classification on the compact case. Moreover, if the scalar curvature is constant, then $(M^n,g,f)$ satisfies $V$-static equation (Proposition 6, \cite{6}). Also, the Euler-Lagrangian equation of Hilbert-Einstein action on the space of Riemannian metric with unit volume and constant scalar curvature is
\begin{eqnarray}\label{1.5}
Ric-\frac{1}{n}Rg=\nabla^2f-\Big(Ric-\frac{1}{n-1}Rg\Big)f.
\end{eqnarray}
A Riemannian manifold $(M^n,g)(n\geq3)$ of constant scalar curvature is called CPE if it admits a smooth solution $f$ satisfying (\ref{1.5}). It is interesting to note that equations (\ref{1.2}) and (\ref{1.5}) are somewhat related. When $\kappa=0$ in equation (\ref{1.4}) together with smooth boundary $\partial M$ such that $f^{-1}(0)=\partial M$ is known as vacuum static space (Fishcher-Marsden equation \cite{11}) and when $\kappa=1$ in (\ref{1.4}) is known as Miao-Tam critical metric \cite{12}. The significance of equation (\ref{1.2}) is that this rearrangement enclosed a large group of metrics, such as static metric, CPE metric and Miao-Tam critical metric.\\

Several authors started investigating the above metrics on different contact structures. In \cite{13}, Ghosh and Patra investigated CPE in the framework of $K$-contact manifold and $(\kappa,\mu)$-contact manifold. They showed that the complete $K$-contact metric satisfying CPE is Einstein and is isometric to unit sphere $S^{2n+1}$. The same authors also considered Miao-Tam critical metric on contact geometry \cite{14}. In \cite{15}, they investigated CPE in Kenmotsu and Almost Kenmotsu manifolds. Recently, Kumara et al. \cite{16} investigated the characteristics of static perfect fluid space-time metrics on almost Kenmotsu manifolds. \\

In this paper, we would be investigating certain contact manifolds $(M^n,g), n\geq 3$ admitting a smooth non-trivial function $f$ satisfying
\begin{eqnarray}\label{A1}
f\mathring{Ric}=\mathring{\nabla}^2f.
\end{eqnarray}
It is clear that (\ref{A1}) is similar to the first part of (\ref{1.2}) without perfect fluid matter. A compact manifold admitting smooth function $f$ satisfying (\ref{A1}) was considered by Coutinho et al. \cite{6} The advantage of this arrangement, is that it encloses a large class of metric, for instance, the static perfect fluid space-time metric eq. (\ref{1.2}) and (\ref{1.3}) \cite{6,3,7}, the critical point equation eq. (\ref{1.5}) \cite{C1}, critical metrics of the volume functional \cite{9,11,12, De} and static spaces \cite{Am}.  
We proved that if a complete and simply connected $K$-contact manifold admitting smooth function $f$ satisfying (\ref{A1}) then it is isometric to the unit sphere $\mathbb{S}^{2n+1}$ and verified this by constructing an example. Moreover, a non-Sasakian $(\kappa,\mu)$-contact metric manifold admitting such function $f$ satisfying (\ref{A1}) is considered.
\section{Preliminaries}
A 2n+1-dimensional smooth manifold $M$ is said to have a contact structure if it admits a (1,1)-tensor field $\varphi$, a vector field $\xi$ called the characteristic vector field such that $d\eta(\xi,X)=0$ for every vector field $X$ on $M$, a 1-form $\eta$ such that $\eta\wedge(d\eta)^n\neq0$ everywhere and an  associate metric $g$ called Riemannian metric satisfying the following conditions:
\begin{eqnarray}\label{a1}
\varphi^2=-I+\eta\otimes\xi,~~~~~d\eta(X,Y)=g(X,\varphi Y),~~~~~\eta(X)=g(X,\xi),
\end{eqnarray}
\begin{eqnarray}\label{a2}
g(\varphi X,\varphi Y)=g(X,Y)-\eta(X)\eta(Y),
\end{eqnarray}
for any vectors field $X,Y$ on $M$. Moreover, if $\nabla$ denotes the Riemannian connection of $g$, then the following relation holds:
\begin{eqnarray}\label{a3}
\nabla_X\xi=-\varphi X-\varphi hX.
\end{eqnarray}
From the definition, it persues that $\varphi\xi=0$ and $\eta \circ\varphi=0$. Then, the manifold $M(\varphi,\xi,\eta,g)$ equipped with such a structure is called a contact metric manifold \cite{4a,7a}.\\

Given a contact metric manifold $M$ we define a symmetric (1,1)-tensor field $h$ and self adjoint operator $l$ by $h=\frac{1}{2}\mathcal{L}_\xi\varphi$ and $l=R(.,\xi)\xi$, where $\mathcal{L}$ denotes Lie differentiation. Then, $h\varphi=-\varphi h,~~~Trh=Tr\varphi h=0,~~~~h\xi=0.$ Also from \cite{7a},
\begin{eqnarray}\label{q1}
Ric(\xi,\xi)=g(Q\xi,\xi)=Trl=2n-|h|^2.
\end{eqnarray}
\begin{eqnarray}\label{m1}
\nabla_\xi h=\varphi-\varphi h^2-\varphi l.
\end{eqnarray}
A contact metric structure on $M$ is said to be normal if the almost complex structure on $M\times\mathbb{R}$ defined by $J(X,fd/dt)=(\varphi X-f\xi,\eta(X)d/dt)$, where $f$ is a real function on $M\times\mathbb{R}$, is integrable.
 A normal contact metric manifold is a Sasakian manifold. An almost contact metric manifold is Sasakian if and only if
\begin{eqnarray}\label{a4}
(\nabla_X\varphi)Y=g(X,Y)\xi-\eta(Y)X,
\end{eqnarray}
for any $X,Y \in TM$. The vector field $\xi$ is a killing vector with respect to $g$ if and only if $h=0.$ A contact metric manifold $M(\varphi,\xi,\eta,g)$ for which $\xi$ is killing (equivalently $h=0$ or $Trl=2n$) is said to be a K-contact metric manifold. On a K-contact manifold, the following formulas are known \cite{7a}
\begin{eqnarray}\label{k1}
\nabla_X\xi=-\varphi X,
\end{eqnarray}
\begin{eqnarray}\label{k2}
Q\xi=2n\xi,
\end{eqnarray}
\begin{eqnarray}\label{k3}
R(\xi,X)Y=(\nabla_X\varphi)Y,
\end{eqnarray}
where $\nabla$ is the operator of covarient differentiation of $g$, $Ric$ is the Ricci tensor of type (0,2) such that $Ric(X,Y)=g(QX,Y)$, where $Q$ is Ricci operator and $R$ is the Riemann curvature tensor of $g$. A Sasakian manifold is $K$-contact and the converse is not true except in dimension 3. The following formula also holds for a $K$-contact manifold
\begin{eqnarray}\label{k4}
(\nabla_Y\varphi)X+(\nabla_{\varphi Y}\varphi)\varphi X=2g(Y,X)\xi-\eta(X)(Y+\eta(Y)\xi).
\end{eqnarray}
\\
As a generalization of the Sasakian case, Blair et al. \cite{5a} introduced $(\kappa,\mu)$-nullity distribution on a contact metric manifold and gave several reasons for studying it. A full classification of $(\kappa,\mu)$-spaces was given by Boeckx \cite{6a}.\\

The $(\kappa,\mu)$-nullity distribution of a contact metric manifold $M^{2n+1}(\varphi,\xi,\eta,g)$ is a distribution
\begin{eqnarray}
N(\kappa,\mu):p\rightarrow N_p(\kappa,\mu)=\{Z\in T_pM:R(X,Y)Z\nonumber=\kappa\{g(Y,Z)X\\-g(X,Z)Y\}+\mu\{g(Y,Z)hX-g(X,Z)hY\}\},\nonumber
\end{eqnarray}
for any $X,Y,Z \in T_pM$ and real numbers $\kappa$ and $\mu$. A contact metric manifold $M^{2n+1}$ with $\xi \in N(\kappa,\mu)$ is called a $(\kappa,\mu)$-contact metric manifold. In particular, if $\mu=0$, then the notion of $(\kappa,\mu)$-nullity distribution reduces to the notion of $\kappa$-nullity distribution, introduced by Tanno \cite{8a}. If $\kappa=1$, the structure is Sasakian, and if $\kappa<1$, the $(\kappa,\mu)$-nullity condition determines the curvature of the manifold completely.\\

\noindent In a $(\kappa,\mu)$-contact metric manifold the following relations hold \cite{5a,3a}
\begin{eqnarray}\label{a5}
h^2=(k-1)\varphi^2,~~~~~k\leq 1,
\end{eqnarray}
\begin{eqnarray}\label{a8}
R(X,Y)\xi=k[\eta(Y)X-\eta(X)Y]+\mu[\eta(Y)hX-\eta(X)hY],
\end{eqnarray}
\begin{eqnarray}\label{a9}
QX=[2(n-1)-n\mu]X+[2(n-1)+\mu]hX\nonumber\\+[2(1-n)+n(2k+\mu)]\eta(X)\xi,
\end{eqnarray}
\begin{eqnarray}\label{a11}
R=2n(2n-2+k-n\mu).
\end{eqnarray}
 Here, $R$ is the scalar curvature of the manifold. 
 \section{Main results}
 In this section we characterized $K$-contact manifold and $(\kappa,\mu)$-contact manifold in which $f$ a smooth function satisfies (\ref{A1}). First, we prove the following result which will be used later on.
 \begin{lem}\label{l1}
 If a Riemannian manifold $(M^{2n+1},g)$ admits a smooth non-trivial function $f$ satisfying (\ref{A1}), then the curvature tensor $R$ can be expressed as
 \begin{eqnarray}
 R(X,Y)Df&=(Xf)QY-(Yf)QX+f[(\nabla_XQ)Y\nonumber\\&-(\nabla_YQ)X]+(X\psi)Y-(Y\psi)X,\nonumber
 \end{eqnarray}
 for any vector fields $X,Y$ on $M^{2n+1}$ and a smooth function $\psi=\frac{\Delta f-Rf}{2n+1}$.
 \end{lem}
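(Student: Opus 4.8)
The plan is to start from the hypothesis $f\mathring{\mathrm{Ric}} = \mathring{\nabla}^2 f$ and rewrite it in terms of the full Ricci and Hessian tensors. Since $\mathring{\mathrm{Ric}} = \mathrm{Ric} - \frac{R}{2n+1}g$ and $\mathring{\nabla}^2 f = \nabla^2 f - \frac{\Delta f}{2n+1}g$, equation (\ref{A1}) is equivalent to
\begin{eqnarray}\label{planeq1}
\nabla^2 f = f\,\mathrm{Ric} + \Big(\frac{\Delta f - Rf}{2n+1}\Big)g = f\,\mathrm{Ric} + \psi g,
\end{eqnarray}
with $\psi$ as defined in the statement. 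In operator form this reads $\nabla_X Df = f\,QX + \psi X$ for every vector field $X$.

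Next I would differentiate this relation covariantly. Applying $\nabla_Y$ to $\nabla_X Df = fQX + \psi X$ and subtracting the expression with $X,Y$ interchanged, together with the term involving $\nabla_{[X,Y]}Df$, produces the curvature operator $R(X,Y)Df = \nabla_X\nabla_Y Df - \nabla_Y\nabla_X Df - \nabla_{[X,Y]}Df$ on the left. On the right, the derivative of $fQY$ gives $(Xf)QY + f(\nabla_X Q)Y + fQ(\nabla_X Y)$, and similarly for the $\psi$ term one gets $(X\psi)Y + \psi\nabla_X Y$; the terms $fQ(\nabla_X Y)$ and $\psi\nabla_X Y$ are symmetric in the way needed to cancel against the $\nabla_{[X,Y]}Df$ contribution and its counterpart. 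Carrying out the antisymmetrization, the symmetric leftover pieces cancel and one is left with exactly
\begin{eqnarray}
R(X,Y)Df &= (Xf)QY - (Yf)QX + f[(\nabla_X Q)Y - (\nabla_Y Q)X]\nonumber\\
&\quad + (X\psi)Y - (Y\psi)X,\nonumber
\end{eqnarray}
which is the claimed formula.

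The only mild obstacle is the bookkeeping in the antisymmetrization: one must be careful that the connection terms $fQ(\nabla_X Y) - fQ(\nabla_Y X) = fQ([X,Y])$ and $\psi(\nabla_X Y - \nabla_Y X) = \psi[X,Y]$ combine to reproduce precisely $\nabla_{[X,Y]}Df = fQ([X,Y]) + \psi[X,Y]$, so that they drop out of the final identity. This is routine once one writes everything out. No global or contact-geometric input is needed here — the lemma is a purely local consequence of (\ref{A1}) valid on any Riemannian manifold — so I would present it as a short direct computation.
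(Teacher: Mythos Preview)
Your proposal is correct and follows essentially the same route as the paper: rewrite (\ref{A1}) as $\nabla_X Df = fQX + \psi X$, differentiate covariantly, and antisymmetrize using the curvature identity $R(X,Y)Df = \nabla_X\nabla_Y Df - \nabla_Y\nabla_X Df - \nabla_{[X,Y]}Df$. Your explicit observation that the bracket terms $fQ([X,Y]) + \psi[X,Y]$ cancel against $\nabla_{[X,Y]}Df$ is exactly the bookkeeping the paper leaves implicit.
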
 
 \begin{proof}
 Suppose a Riemannian manifold $(M^{2n+1},g)$ admits a smooth non-trivial function $f$ satisfying (\ref{A1}), then (\ref{A1}) can be rewritten as
 \begin{eqnarray}\label{2.1}
 \nabla_XDf=fQX+\psi X,
 \end{eqnarray}
 Taking the covariant derivative of (\ref{2.1}) along arbitrary vector field $Y$ we obtained
 \begin{eqnarray}\label{2.2}
 \nabla_Y\nabla_XDf=(Yf)QX+f(\nabla_YQ)X\nonumber\\+fQ(\nabla_YX)+(Y\psi)X+\psi(\nabla_YX).
 \end{eqnarray}
Inserting repeatedly (\ref{2.2}) in the expression for curvature tensor, $$R(X,Y)Z=\nabla_X\nabla_YZ-\nabla_Y\nabla_XZ-\nabla_{[X,Y]}Z$$ we get the required result.
 \end{proof}
 \begin{thm}\label{t1}
 Let $M^{2n+1}(\varphi,\eta,\xi,g)$ be a complete and simply connected $K$-contact manifold and $f$ a non-trivial smooth function satisfying (\ref{A1}), then it is compact, Einstein and isometric to the unit sphere $\mathbb{S}^{2n+1}$.
 \end{thm}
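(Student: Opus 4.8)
The plan is to bring \eqref{A1} to the form of Obata's equation $\nabla^{2}w=-w\,g$. Rewrite \eqref{A1} as $\nabla_{X}Df=fQX+\psi X$ with $\psi=\frac{\Delta f-Rf}{2n+1}$. Since $M$ is $K$-contact we may use \eqref{k1}--\eqref{k4}; moreover $\xi$ is Killing, so $\nabla_{\xi}\varphi=\tfrac12\mathcal L_{\xi}\varphi=0$ and $\mathcal L_{\xi}Q=0$, whence $(\nabla_{X}Q)\xi=Q\varphi X-2n\varphi X$ and $(\nabla_{\xi}Q)X=Q\varphi X-\varphi QX$. First I would set $Y=\xi$ in Lemma~\ref{l1}: using $R(X,\xi)Df=-(\nabla_{X}\varphi)Df$ from \eqref{k3}, $Q\xi=2n\xi$, and the two identities above, the third-order term collapses and one obtains
\[
-(\nabla_{X}\varphi)Df=2n(Xf)\xi-(\xi f)QX+f\varphi(QX-2nX)+(X\psi)\xi-(\xi\psi)X,
\]
call this $(\star)$. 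Taking the inner product of $(\star)$ with $\xi$, and using $g((\nabla_{X}\varphi)Df,\xi)=Xf-(\xi f)\eta(X)$ and $g(QX,\xi)=2n\eta(X)$, gives $d\bigl(\psi+(2n+1)f\bigr)=c\,\eta$ with $c=(2n+1)(\xi f)+\xi\psi$. Since the left side is closed, $0=dc\wedge\eta+c\,d\eta$; wedging with $\eta$ yields $c\,\eta\wedge d\eta=0$, and $\eta\wedge(d\eta)^{n}\neq0$ forces $\eta\wedge d\eta\neq0$ pointwise, so $c\equiv0$. Hence $\psi+(2n+1)f=b$ is a constant.

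Next I would pair $(\star)$ with $Df$ and with $\varphi Df$. As $\nabla_{X}\varphi$ is skew-adjoint, $g((\nabla_{X}\varphi)Df,Df)=0$, while a short computation from $\varphi^{2}=-I+\eta\otimes\xi$ and $\nabla_{X}\eta=-g(\varphi X,\cdot)$ gives $g((\nabla_{X}\varphi)Df,\varphi Df)=-(\xi f)g(X,\varphi Df)$. Substituting $X\psi=-(2n+1)Xf$ and $\xi\psi=-(2n+1)(\xi f)$ from the first step and comparing coefficients of $X$, one obtains the pair
\[
(\xi f)(Q-2nI)Df+f(Q-2nI)\varphi Df=0,\qquad f(Q-2nI)Df-(\xi f)(Q-2nI)\varphi Df=0,
\]
a linear system for $(Q-2nI)Df$ and $(Q-2nI)\varphi Df$ with determinant $-\bigl(f^{2}+(\xi f)^{2}\bigr)$. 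Since $f>0$ this is nonzero everywhere, so $QDf=2nDf$ and $Q\varphi Df=2n\varphi Df$; together with $Q\xi=2n\xi$ this says the Ricci operator equals $2n\,I$ on $\operatorname{span}\{\xi,U,\varphi U\}$, where $U:=Df-(\xi f)\xi$ is the horizontal gradient. (Independently, contracting Lemma~\ref{l1} and using $\operatorname{div}Q=\tfrac12 dR$ with $d\psi=-(2n+1)df$ gives $f\,dR=2\bigl(R-2n(2n+1)\bigr)df$, so $R-2n(2n+1)$ is a fixed multiple of $f^{2}$.)

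The hard part will be to upgrade this to $Q=2nI$, i.e.\ that $M$ is Einstein with $Ric=2ng$. For $n=1$ this is short: where $U\neq0$ the triple $\{\xi,U,\varphi U\}$ is a frame of eigenvectors of $Q$ with eigenvalue $2$, so $Q=2I$ there; on the interior of $\{U=0\}$, differentiating $Df=(\xi f)\xi$ and comparing with $\nabla_{X}Df=fQX+\psi X$ forces, by self-adjointness of $Q$, that $\xi f=0$, hence $Df=0$, $f$ locally constant and again $Q=2I$; by continuity $Q=2I$ on $M$. For $n>1$ the $\varphi$-invariant orthogonal complement of $\operatorname{span}\{\xi,U,\varphi U\}$ in the contact distribution is also $Q$-invariant, and one must still show $Q=2n$ there; I expect this to require feeding $QDf=2nDf$ back into Lemma~\ref{l1} and $(\star)$, using \eqref{k4} to control the remaining $\nabla\varphi$-terms, and this is the technical core. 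In any case $M$ is then Einstein with $Ric=2ng$ and $R=2n(2n+1)$.

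Finally, with $Q=2nI$ and $\psi=b-(2n+1)f$ the equation $\nabla_{X}Df=fQX+\psi X$ becomes $\nabla_{X}Df=2nfX+(b-(2n+1)f)X=(b-f)X$, that is $\nabla^{2}f=(b-f)g$. Putting $w:=f-b$ gives $\nabla^{2}w=-w\,g$ with $w$ non-constant, since $f$ is non-trivial. By Obata's theorem a complete Riemannian manifold admitting such a $w$ is isometric to the unit sphere, so $M$ is isometric to $\mathbb{S}^{2n+1}$; in particular it is compact, and it has been shown to be Einstein. Completeness enters only through Obata's theorem.
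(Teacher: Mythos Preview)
Your architecture is essentially the paper's: rewrite \eqref{A1} as $\nabla_X Df=fQX+\psi X$, feed $\xi$ into Lemma~\ref{l1}, show $\psi+(2n+1)f$ is constant, deduce $QDf=2nDf$, prove $M$ is Einstein, and finish with Obata/Tashiro. Your exterior-calculus argument for $c\equiv 0$ and your $2\times 2$ linear system are pleasant alternatives to the paper's equations (\ref{2.11})--(\ref{2.13}) and (\ref{2.14})--(\ref{2.16}), but they land at the same intermediate conclusions.

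The genuine gap is the one you flag yourself: upgrading $QDf=2nDf$ (and $Q\varphi Df=2n\varphi Df$) to $Q=2nI$ for $n>1$. Your suggestion that the orthogonal complement of $\mathrm{span}\{\xi,U,\varphi U\}$ is $Q$-invariant is not justified, and the ODE $(R-2n(2n+1))f^{2}=\text{const}$ (your sign is off; it is $C/f^{2}$, not $Cf^{2}$) does not by itself force $R$ constant. The paper closes this gap in two moves you are missing. First, it pairs $(\star)$ with an arbitrary $Y$, then replaces $(X,Y)$ by $(\varphi X,\varphi Y)$ and adds, using the $K$-contact identity \eqref{k4} for $(\nabla_{Y}\varphi)X+(\nabla_{\varphi Y}\varphi)\varphi X$; restricting to $X,Y\perp\xi$ and using $f\neq 0$ yields the algebraic identity $Q\varphi+\varphi Q=4n\varphi$, from which a $\varphi$-basis computation gives $R=2n(2n+1)$ directly. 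Second, differentiating $QDf=2nDf$ along $X$ and tracing over $X$, using $\mathrm{div}\,Q=\tfrac12 dR=0$, gives $f|Q|^{2}+(\psi-2nf)R-2n(2n+1)\psi=0$, i.e.\ $|Q|^{2}=2nR$. Together these give $|Q-\tfrac{R}{2n+1}I|^{2}=|Q|^{2}-\tfrac{R^{2}}{2n+1}=0$, hence $Q=2nI$. With this in hand your final paragraph goes through unchanged.
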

\begin{proof}
Taking the covariant derivative of (\ref{k2}) along arbitrary vector field $X$ and using (\ref{k1}) gives
\begin{eqnarray}\label{2.3}
(\nabla_XQ)\xi=Q\varphi X-2n\varphi X,
\end{eqnarray}
for any vector field $X$ on $M$. As $\xi$ is Killing, we have $\mathcal{L}_\xi Q=0$. Making use of (\ref{k2}) and (\ref{2.3}) in the last expression yields
\begin{eqnarray}\label{2.4}
\nabla_\xi Q=Q\varphi-\varphi Q.
\end{eqnarray}
Replacing $X$ by $\xi$ in Lemma \ref{l1} and using (\ref{2.3}), (\ref{2.4}) and (\ref{k2}) we get
\begin{eqnarray}\label{2.5}
R(\xi,X)Df=(\xi f)QX-2n(Xf)\xi\nonumber\\+f[2n\varphi X-\varphi QX]+(\xi\psi)X-(X\psi)\xi.
\end{eqnarray}
Taking an inner product of (\ref{2.5}) with vector field $Y$, then using (\ref{k3}) after taking an inner product of (\ref{k3}) with $Df$, we obtain:
\begin{eqnarray}\label{2.6}
g((\nabla_X\varphi)Y,Df)+(\xi f)Ric(X,Y)-2n(Xf)\eta(Y)+f\{2ng(\varphi X,Y)\nonumber\\-g(\varphi QX,Y)\}+(\xi\psi)g(X,Y)-(X\psi)\eta(Y)=0.
\end{eqnarray}
Replacing $X$ by $\varphi X$ and $Y$ by $\varphi Y$ in (\ref{2.6}) yields
\begin{eqnarray}\label{2.7}
g((\nabla_{\varphi X}\varphi)\varphi Y,Df)+(\xi f)Ric(\varphi X,\varphi Y)\nonumber\\-2nfg(X,\varphi Y)-fg(Q\varphi X,Y)+(\xi \psi)g(\varphi X,\varphi Y)=0.
\end{eqnarray}
Making use of (\ref{k4}) and (\ref{2.6}) in (\ref{2.7}) we get
\begin{eqnarray}\label{2.8}
(2n-1)\{(Xf)\eta(Y)-(Yf)\eta(X)\}+\{(Y\psi)\eta(X)\nonumber\\-(X\psi)\eta(Y)\}+8nfg(\varphi X,Y)+2fg((Q\varphi+\varphi Q)Y,X)=0.
\end{eqnarray}
Taking $X=\varphi X$ and $Y=\varphi Y$ in (\ref{2.8}) and using the fact that $f\neq0$, we get
\begin{eqnarray}\label{2.9}
(Q\varphi+\varphi Q)X=4n\varphi X,
\end{eqnarray}
for any vector field $X$ on $M$.\\ Let $\{e_i,\varphi e_i,\xi\}, i=1,2,...n$ be a $\varphi$-basis of $M$ such that $Qe_i=\lambda_ie_i$. Then we have $\varphi Qe_i=\lambda_i\varphi e_i$. Making use of this in (\ref{2.9}), we get $Q\varphi e_i=(4n-\lambda_i)\varphi e_i$. Therefore, the scalar curvature is given as
\begin{eqnarray}
R=g(Q\xi,\xi)+\sum_{i=1}^n[g(Qe_i,e_i)+g(Q\varphi e_i,\varphi e_i)]=2n(2n+1).\nonumber
\end{eqnarray}
Taking an inner product of (\ref{2.5}) with $Df$, then using (\ref{k2}) gives
\begin{eqnarray}\label{2.10}
(\xi f)QDf-2n(Df)(\xi f)+f\{Q\varphi Df\nonumber\\-2n\varphi Df\}+(\xi\psi)Df-(D\psi)(\xi f)=0.
\end{eqnarray}
Again taking an inner product of (\ref{2.5}) with $\xi$ then using $R(\xi,X)\xi=\eta(X)\xi-X$, we obtain:
\begin{eqnarray}\label{2.11}
(2n+1)[Df-(\xi f)\xi]=[(\xi\psi)-D\psi].
\end{eqnarray}
Taking the covariant derivative of (\ref{2.11}) along arbitrary vector field $X$, then taking an inner product of forgoing equation with vector field $Y$, we get
\begin{eqnarray}\label{2.12}
2(2n+1)(\xi f)g(\varphi X,Y)+2(\xi\psi)g(\varphi X,Y)\nonumber\\=X[(2n+1)(\xi f)+(\xi\psi)]\eta(Y)-Y[(2n+1)(\xi f)+(\xi\psi)]\eta(X),
\end{eqnarray}
where we make use of $g(\nabla_XDf,Y)=g(\nabla_YDf,X)$. Choosing $X,Y\perp \xi$ and noticing that $d\eta\neq 0$ in (\ref{2.12}) gives $(2n+1)(\xi f)+(\xi\psi)=0$, hence (\ref{2.11}) becomes
\begin{eqnarray}\label{2.13}
(2n+1)Df+D\psi=0.
\end{eqnarray}
Making use of (\ref{2.9}) and (\ref{2.13}) in (\ref{2.10}) gives
\begin{eqnarray}\label{2.14}
(\xi f)[QDf-2nDf]-f[\varphi QDf-2n\varphi Df]=0.
\end{eqnarray}
Operating (\ref{2.14}) by $\varphi$ yields
\begin{eqnarray}\label{2.15}
(\xi f)[\varphi QDf-2n\varphi Df]-f[\varphi^2 QDf-2n\varphi^2 Df]=0.
\end{eqnarray}
Combining (\ref{2.15}), (\ref{2.14}) and using (\ref{a1}) we obtain
\begin{eqnarray}\label{2.16}
((\xi f)^2+f^2)(QDf-2nDf)=0.
\end{eqnarray}
As $fneq0$ we must have $QDf=2nDf$. Taking the covariant derivative of last equation and using (\ref{2.1}) gives $$(\nabla_XQ)Df+fQ^2X+(\psi-2nf)QX-2n\psi X=0.$$ Contracting the obtained equation implies $|Q|^2=2nR$, as $R=2n(2n+1)$ is constant. Then as a consequence, we get, $$\Big|Q-\frac{R}{2n}I\Big|^2=|Q|^2-\frac{2R^2}{2n+1}+\frac{R^2}{2n+1}=0.$$
Therefore, we must have $Q=\frac{R}{2n+1}I=2nI$, that is, $M$ is Einstein. As $M$ is complete, by Myer's theorem \cite{1b} it is compact. Integrating (\ref{2.13}), we get $\psi=-(2n+1)f+k$, where $k$ is some constant. In consequence of this, and $Q=2nI$ in (\ref{2.1}) yields $$\nabla_XDf=(-f+k)X.$$ We now apply Tashiro's theorem \cite{2b} to conclude that $M$ is isometric to the unit sphere $\mathbb{S}^{2n+1}$. This completes the proof.
\end{proof}

On a Sasakian manifold, it is well-known that the Ricci operator commute with $\varphi$, that is, $Q\varphi=\varphi Q$.
Making use of this in (\ref{2.9}) gives $Q\varphi X=2n\varphi X$. Then replacing $X$ by $\varphi X$ in the last expression yield $QX=2nX$, which implies that $M$ is Einstein. Therefore proceeding similarly as in Theorem \ref{t1}, we can state the following:
\begin{cor}\label{c1}
Let $M^{2n+1}(\varphi,\eta,\xi,g)$ be a complete and simply connected Sasakian manifold and $f$ a non-trivial smooth function satisfying (\ref{A1}), then it is compact, Einstein and isometric to the unit sphere $\mathbb{S}^{2n+1}$.
\end{cor}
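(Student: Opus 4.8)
The plan is to obtain this as a genuine corollary of Theorem \ref{t1}. Every Sasakian manifold is $K$-contact, so Lemma \ref{l1} applies and, more importantly, the entire chain of identities (\ref{2.3})--(\ref{2.16}) derived in the proof of Theorem \ref{t1} — none of which used anything beyond the $K$-contact hypothesis $h=0$ — remains available. In particular the key algebraic relation (\ref{2.9}), namely $(Q\varphi+\varphi Q)X=4n\varphi X$, holds here too. The one extra structural fact I would bring in is the well-known commutation $Q\varphi=\varphi Q$ on a Sasakian manifold, which follows from (\ref{a4}) by a standard computation of $R(\xi,X)$ and contraction.

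Feeding $Q\varphi=\varphi Q$ into (\ref{2.9}) collapses the left-hand side to $2Q\varphi X$, giving $Q\varphi X=2n\varphi X$ for every vector field $X$. Replacing $X$ by $\varphi X$ and using $\varphi^2=-I+\eta\otimes\xi$ from (\ref{a1}) together with $Q\xi=2n\xi$ from (\ref{k2}), I get $-QX+2n\eta(X)\xi=-2nX+2n\eta(X)\xi$, hence $QX=2nX$ for all $X$; that is, $M$ is Einstein with $R=2n(2n+1)$. From this point the argument is verbatim the tail of the proof of Theorem \ref{t1}: $M$ complete and Einstein with positive Ricci curvature is compact by Myers' theorem; the identity (\ref{2.13}), obtained purely from the $K$-contact structure, integrates to $\psi=k-(2n+1)f$ for a constant $k$; substituting this and $Q=2nI$ into (\ref{2.1}) yields $\nabla_XDf=(k-f)X$; and Tashiro's (Obata-type) theorem then forces $M$ to be isometric to the unit sphere $\mathbb{S}^{2n+1}$.

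Since the substantive work is inherited from Theorem \ref{t1}, there is no serious obstacle; the only points needing care are (i) verifying that the derivation of (\ref{2.3})--(\ref{2.16}) really used only $h=0$, so that it transfers to the Sasakian case without change, and (ii) the role of the non-triviality hypothesis: if $f$ were a nonzero constant, then (\ref{A1}) would merely say $\mathring{Ric}=0$, which does not by itself distinguish the round sphere among compact Einstein Sasakian manifolds, so one must use non-triviality to ensure $Df\not\equiv 0$ before invoking Tashiro's theorem.
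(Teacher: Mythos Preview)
Your proposal is correct and follows essentially the same route as the paper: use the Sasakian commutation $Q\varphi=\varphi Q$ in (\ref{2.9}) to obtain $Q\varphi X=2n\varphi X$, replace $X$ by $\varphi X$ to conclude $QX=2nX$, and then finish exactly as in the tail of Theorem \ref{t1}. Your added remarks on checking that (\ref{2.3})--(\ref{2.16}) rely only on $h=0$, and on the role of the non-triviality of $f$ for Tashiro's theorem, are sound refinements but do not depart from the paper's argument.
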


It is known that if a Riemannian manifold $(M^n,g)(n\geq 3)$ with constant scalar curvature and $f$ a smooth function on $M^n$ satisfying (\ref{1.2}). Then $(M^n,g,f)$ satisfies the $V$-static equation (\ref{1.4}) for some constant $k$ (Proposition 6, \cite{6}). Further, we see that the scalar curvature of a $K$-contact manifold is constant from Theorem \ref{t1} and hence we are in a position to state the following:
\begin{cor}\label{c2}
 Let $M^{2n+1}(\varphi,\eta,\xi,g)$ be a complete and simply connected $K$-contact manifold without boundary and $f$ a non-trivial smooth-function satisfying $V$-static equation (\ref{1.4}), then it is compact, Einstein and isometric to the unit sphere $S^{2n+1}$.
\end{cor}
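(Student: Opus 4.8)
The plan is to reduce the Corollary to Theorem \ref{t1} by observing that every $V$-static potential automatically satisfies (\ref{A1}), so no genuinely new argument is needed. Since $\mathfrak{L}_g^*(f)=-(\Delta f)g+\nabla^2 f-f\,Ric$, the $V$-static equation (\ref{1.4}) reads
\[
-(\Delta f)\,g+\nabla^2 f-f\,Ric=\kappa\,g .
\]
Now decompose each symmetric $(0,2)$-tensor appearing here into its pure-trace part (a scalar multiple of $g$) and its trace-free part; this decomposition is linear. The tensors $\kappa g$ and $(\Delta f)g$ are pure trace, so their trace-free parts vanish, and therefore the trace-free part of the whole identity reduces to
\[
\mathring{\nabla}^2 f-f\,\mathring{Ric}=0 ,
\]
which is exactly equation (\ref{A1}). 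Equivalently, one may first take the trace of (\ref{1.4}) to obtain $2n\,\Delta f=-(Rf+(2n+1)\kappa)$, where $2n\neq 0$ since $2n+1\geq 3$, and then substitute back; the pure-trace terms cancel and (\ref{A1}) emerges. Note that, in contrast to Proposition 6 of \cite{6}, this direction requires no a priori hypothesis on the scalar curvature.

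Consequently $M^{2n+1}$ is a complete and simply connected $K$-contact manifold with empty boundary on which the non-trivial smooth function $f$ satisfies (\ref{A1}). Applying Theorem \ref{t1} to this $f$ yields at once that $M^{2n+1}$ is compact, Einstein, and isometric to the unit sphere $\mathbb{S}^{2n+1}$, which is the assertion.

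I do not expect a substantial obstacle here: the entire content of the Corollary is carried by Theorem \ref{t1}, and the $V$-static equation enters only as a convenient sufficient condition for (\ref{A1}). The one point requiring care is that the reduction produces (\ref{A1}) for the \emph{same} potential $f$, so the non-triviality hypothesis is inherited intact — this is what legitimizes the appeal to Tashiro's theorem at the end of the proof of Theorem \ref{t1} — and it is also the reason the ``without boundary'' hypothesis is imposed, so that the setting matches that of Theorem \ref{t1}.
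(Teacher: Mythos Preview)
Your proposal is correct and follows the same strategy as the paper: reduce the $V$-static hypothesis to equation (\ref{A1}) and then invoke Theorem \ref{t1}. The paper's text frames the reduction via Proposition~6 of \cite{6} (which actually records the converse implication under constant scalar curvature), whereas your direct trace-free decomposition of (\ref{1.4}) is the cleaner route and, as you observe, needs no assumption on the scalar curvature; otherwise the arguments coincide.
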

\begin{rem}\label{r1}
In particular, the $V$-static equation reduces to Fishcher-Marsden equation \cite{11} for $k=0$ and  Miao-Tam critical metric \cite{12} for $k=1$. In \cite{14}, authors studied Miao-Tam critical metric and in \cite{3b} the Fishcher-Marsden equation without boundary in certain contact metric manifolds. Therefore, Theorem 3.3 \cite{3b} and Theorem 3.2 \cite{14}  are particular cases of Corollary \ref{c2}.
\end{rem}
\begin{thm}\label{t2}
Let $M^{2n+1}(\varphi,\eta,\xi,g)$ be a non-Sasakian $(\kappa,\mu)$-contact metric manifold and $f$ a non-trivial smooth function satisfying (\ref{A1}), then $M$ is locally flat for $n=1$ and for $n>1$ is locally isometric to the product space $E^{n+1}\times S^n(4).$
\end{thm}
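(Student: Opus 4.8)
The plan is to exploit the rigid curvature structure of a non-Sasakian $(\kappa,\mu)$-contact metric manifold, where the operator $h$ satisfies $h^2=(\kappa-1)\varphi^2$ with $\kappa<1$, together with the Codazzi-type identity from Lemma \ref{l1}. First I would compute $(\nabla_XQ)\xi$ and $\nabla_\xi Q$ explicitly from the expression \eqref{a9} for $Q$ and from \eqref{a3}, \eqref{m1}, \eqref{a8}; these are standard in the $(\kappa,\mu)$-literature and will be the analogue of \eqref{2.3}--\eqref{2.4} in Theorem \ref{t1}. Substituting $X=\xi$ into Lemma \ref{l1} then yields an expression for $R(\xi,X)Df$, which I would pair, on one hand, against an arbitrary $Y$, and on the other hand against $\xi$, using \eqref{a8} to evaluate $R(\xi,X)\xi=\kappa(\eta(X)\xi-X)+\mu\,hX$ (note the extra $hX$ term — this is where the calculation departs from the $K$-contact case).

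The second stage is to extract, from the $\xi$-contracted identity, a formula expressing $Df$ (or its component orthogonal to $\xi$) in terms of $D\psi$, $\xi f$, $\xi\psi$, and $hDf$, playing the role of \eqref{2.11}. Differentiating this covariantly and skew-symmetrizing, as in the passage leading to \eqref{2.12}--\eqref{2.13}, together with the non-degeneracy of $d\eta$, should force relations among $\xi f$, $\xi\psi$ and the $h$-eigencomponents of $Df$. The key structural fact I would try to reach is that $Df$ is proportional to $\xi$, i.e. $Df=(\xi f)\xi$; then \eqref{2.1} restricted to directions orthogonal to $\xi$ gives $fQX=-\psi X$ on the contact distribution, i.e. $Q$ has a very constrained form there. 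Combining this with \eqref{a9}, which shows $Q$ acts as $[2(n-1)-n\mu]X+[2(n-1)+\mu]hX$ on vectors orthogonal to $\xi$, and using that $h$ has nonzero eigenvalues $\pm\sqrt{1-\kappa}$, I expect to deduce $2(n-1)+\mu=0$ and pin down $\kappa,\mu$, forcing either $\kappa=\mu=0$ (whence $h^2=\varphi^2$ but also flatness via \eqref{a8} and \eqref{a11}) when $n=1$, or the Boeckx invariant to take the value corresponding to $E^{n+1}\times S^n(4)$ when $n>1$.

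The final step is to invoke the classification of non-Sasakian $(\kappa,\mu)$-spaces by Boeckx \cite{6a}: a non-Sasakian $(\kappa,\mu)$-contact metric manifold with $\kappa=0,\mu=0$ in dimension three is flat, and in higher dimension the unique space with the forced values of $(\kappa,\mu)$ is locally isometric to $E^{n+1}\times S^n(4)$. So once the algebraic constraints on $\kappa$ and $\mu$ are obtained, the geometric conclusion is immediate from the known classification, exactly as in \cite{13} for the CPE case.

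The main obstacle I anticipate is the bookkeeping in the first two stages: unlike the $K$-contact situation, neither $h$ nor the $\varphi$-anticommutation can be ignored, so every contraction carries $h$-dependent terms, and isolating the clean relation $Df\parallel\xi$ will require carefully separating components along $\xi$, along $hDf$, and along $\varphi Df$ — possibly by applying $\varphi$ and $h$ to the basic identity and combining, in the spirit of how \eqref{2.14}--\eqref{2.16} were handled, but now with the three-term structure $\{I,h,\eta\otimes\xi\}$ of $Q$. A secondary subtlety is ensuring the argument genuinely uses non-Sasakian-ness ($\kappa\neq1$, equivalently $h\neq0$) at the point where eigenvalues of $h$ are divided out.
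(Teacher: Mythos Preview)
Your opening moves are right --- computing $(\nabla_XQ)\xi$, $\nabla_\xi Q$, and the analogue of \eqref{2.11} with the extra $hDf$ term --- and the paper does exactly this, arriving at
\[
(2n+1)\kappa\{(\xi f)\xi-Df\}+\{(\xi\psi)\xi-D\psi\}-\mu\,hDf=0.
\]
But from here your plan diverges from what actually works, and the divergence is a genuine gap.

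You propose to push through to $Df=(\xi f)\xi$ in all cases and then read off constraints on $Q$ along the contact distribution. The paper does \emph{not} establish $Df\parallel\xi$ in general; instead it differentiates the displayed identity along $\xi$ (using also the contracted relation $RDf+2nD\psi=0$) and applies $\varphi$ to obtain $2n\mu^2(\kappa-1)\varphi^2Df=0$. Since $\kappa<1$, this forces a \emph{dichotomy}: either $\mu=0$ or $\varphi^2Df=0$. The branch $\varphi^2Df=0$ (your intended target) is the degenerate one: anti-symmetrizing as you suggest gives $\xi f=0$, hence $f$ is \emph{constant}, and then \eqref{2.1} makes $M$ Einstein; comparison with \eqref{a9} yields $2(n-1)+\mu=0$, which for $n>1$ feeds back into an independent algebraic relation to give $\kappa>1$, a contradiction. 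So the path through $Df\parallel\xi$ and $2(n-1)+\mu=0$ that you sketch as the main line is actually a dead end for $n>1$, not the route to $E^{n+1}\times S^n(4)$.

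The substantive case is $\mu=0$, which you do not address. There the paper takes the identity for $g(R(X,Y)Df,\xi)$ obtained from Lemma~\ref{l1}, substitutes $\varphi X,\varphi Y$, and extracts a purely algebraic relation $Q\varphi+\varphi Q-\varphi Qh-hQ\varphi=4n\kappa\varphi$; expanding via \eqref{a9} yields $\mu(n+1)=\kappa(\mu-2)$, whence $\kappa=0$. Then $R(X,Y)\xi=0$, and the conclusion comes from Blair's classical theorem that a contact metric manifold with $R(X,Y)\xi=0$ is flat in dimension $3$ and locally $E^{n+1}\times S^n(4)$ otherwise --- not from the Boeckx classification. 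Your proposal is missing both this $\mu=0$ branch and the specific algebraic identity that resolves it.
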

\begin{proof}
Replacing $Y$ by $\xi$ in (\ref{a8}), then using it in (\ref{m1}) gives
\begin{eqnarray}\label{3.1}
\nabla_\xi h=\mu h\varphi.
\end{eqnarray}
Taking the covariant derivative of (\ref{a9}) along $\xi$ and using (\ref{3.1}) we get
\begin{eqnarray}\label{3.2}
(\nabla_\xi Q)X=\mu(2(n-1)+\mu)h\varphi X,
\end{eqnarray}
for any vector field $X$ on $M$. Moreover, from (\ref{a8}) we have $Q\xi=2n\kappa\xi$. Differentiating this along arbitrary vector field $X$ and using (\ref{a1}) yields
\begin{eqnarray}\label{3.3}
(\nabla_XQ)\xi=Q(\varphi+\varphi h)X-2n\kappa(\varphi+\varphi h)X.
\end{eqnarray}
Taking an inner product of Lemma \ref{l1} with $\xi$ and using (\ref{3.2}) and (\ref{3.3}), we obtain
\begin{eqnarray}\label{3.4}
g(R(X,Y)Df,\xi)=2n\kappa\{(Xf)\eta(Y)-(Yf)\eta(X)\}\nonumber\\+f\{g(Q\varphi X+\varphi QX,Y)+g(Q\varphi hX+h\varphi QX,Y)\}\nonumber\\-4n\kappa fg(\varphi X,Y)+(X\psi)\eta(Y)-(Y\psi)\eta(X).
\end{eqnarray}
Contracting Lemma \ref{l1} along $X$ and making use of $Q\xi=2n\kappa\xi$ yields
\begin{eqnarray}\label{3.5}
RDf+2nD\psi=0.
\end{eqnarray}
On the other hand, replacing $X$ by $\xi$ in (\ref{3.4}) gives
\begin{eqnarray}\label{3.6}
g(R(\xi,Y)Df,\xi)=2n\kappa\{(\xi f)\eta(Y)-(Yf)\}+(\xi\psi)\eta(Y)-(Y\psi).
\end{eqnarray}
Substituting $X=\xi$ in (\ref{a8}), then taking its inner product with $Df$, we get
\begin{eqnarray}\label{3.7}
g(R(\xi,Y)\xi,Df)=\kappa\{(\xi f)\eta(Y)-(Yf)\}-\mu g(hY,Df).\nonumber
\end{eqnarray}
In consequence of this, Eq. (\ref{3.6}) becomes
\begin{eqnarray}\label{3.8}
(2n+1)\kappa\{(\xi f)\xi-Df\}+\{(\xi\psi)\xi-D\psi\}-\mu hDf=0.
\end{eqnarray}
Replacing $X$ by $\xi$ in (\ref{2.1}) and using $Q\xi=2n\kappa\xi$ infer
\begin{eqnarray}\label{3.9}
\nabla_\xi Df=(2n\kappa f+\psi)\xi.
\end{eqnarray}
Combining (\ref{3.5}) and (\ref{3.8}), then taking its covariant derivative along $\xi$ and using (\ref{3.1}) and (\ref{3.9}), we get
\begin{eqnarray}\label{3.10}
[R-2n(2n+1)\kappa][2n\kappa f\xi +\psi\xi-\xi(\xi f)\xi]-2n\mu^2h\varphi Df=0.
\end{eqnarray}
Operating (\ref{3.10}) by $\varphi$ gives $2n\mu^2hDf=0$. Then, operating this by $h$ and using (\ref{a5}), we get $2n\mu^2(\kappa-1)\varphi^2 Df=0$. Since $M$ is non-Sasakian, we have either (i) $\mu=0$ or (ii) $\varphi^2 Df=0$.\\

\noindent Case (i): Replacing $X$ by $\varphi X$ and $Y$ by $\varphi Y$ in (\ref{3.4}) and noting that $f\neq0$, we get
\begin{eqnarray}\label{3.11}
Q\varphi X+\varphi QX-\varphi QhX-hQ\varphi X-4n\kappa\varphi X=0.
\end{eqnarray}
Replacing $X$ by $\varphi X$ in (\ref{a9}), we get
$$Q\varphi X=[2(n-1)-n\mu]\varphi X+[2(n-1)+\mu]h\varphi X.$$ Then, operating this by $h$ and using (\ref{a5}) infer
$$hQ\varphi X=[2(n-1)-n\mu]h\varphi X-(\kappa-1)[2(n-1)+\mu]\varphi X.$$ Again operating (\ref{a9}) by $\varphi$ yields
$$\varphi QX=[2(n-1)-n\mu]\varphi X+[2(n-1)+\mu]\varphi hX.$$ Replacing $X$ by $hX$ in the last expression gives
$$\varphi QhX=[2(n-1)-n\mu]\varphi hX-(\kappa-1)(n-1)+\mu]\varphi X.$$
Making use of the last four equations in (\ref{3.11}), we obtain 
\begin{eqnarray}\label{m2}
\mu(n+1)=\kappa(\mu-2).
\end{eqnarray}
By our assumption, $\mu=0$ implies from above relation $\kappa=0$. Therefore, (\ref{a8}) becomes $R(X,Y)\xi=0$. Hence we can conclude that $M$ is locally flat for $n=1$ and for $n>1$ is locally isometric to the product space of $E^{n+1}\times S^n(4)$ (see \cite{7a}).\\

\noindent Case (ii): $\varphi^2Df=0$ implies $Df=(\xi f)\xi$. Taking the covariant derivative of this along vector field $X$ and using (\ref{2.1}) and (\ref{a3}), we get
\begin{eqnarray}\label{3.12}
\nabla_XDf=X(\xi f)\xi-(\xi f)(\varphi X+\varphi hX).\nonumber
\end{eqnarray}
Anti-symmetrizing the last expression yields $$X(\xi f)\eta(Y)-Y(\xi f)\eta(X)+(\xi f)d\eta(X,Y)=0.$$ Choosing $X,Y\perp\xi$ and noting that $d\eta\neq0$, we obtain $\xi f=0$ implies $Df=0$, i.e., $f$ is constant on $M$. Also, from (\ref{3.5}) we see that $\psi$ is constant. In consequence, (\ref{2.1}) implies $M$ is Einstein, i.e., $QX=2n\kappa X$. Contracting this we get the scalar curvature as $R=2n\kappa(2n+1)$. Comparing this with (\ref{a11}) it follows that $n\mu=2(n-1)-2n\kappa$. Making use of the last relation and $QX=2n\kappa X$ in (\ref{a9}), we obtain $(2(n-1)+\mu)h=0$. Since $M$ is non-Sasakian we must have $2(n-1)+\mu=0$. Obviously for $n=1$, $\mu=0=\kappa$ and hence $R(X,Y)\xi=0$. On the other hand, for $n>1$, inserting $2(n-1)+\mu=0$ in (\ref{m2}) we obtain $\kappa=n-\frac{1}{n}>1$, a contradiction. This completes the proof.
\end{proof}
\noindent By similar arguments as in Corollary \ref{c2}, we can state the following:
\begin{cor}\label{c3}
Let $M^{2n+1}(\varphi,\eta,\xi,g)$ be a non-Sasakian $(\kappa,\mu)$-contact metric manifold without boundary and $f$ is a non-constant solution of $V$-static equation (\ref{1.4}), then it is locally flat for $n=1$ and for $n>1$ is locally isometric to the product space $E^{n+1}\times S^n(4)$.
\end{cor}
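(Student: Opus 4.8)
The plan is to reduce Corollary \ref{c3} to Theorem \ref{t2}. The key point I would establish first is that a non-constant solution of the $V$-static equation automatically satisfies (\ref{A1}). Indeed, (\ref{1.4}) unravels to
$$-(\Delta_g f)g+Hess_g f-f\,Ric_g=\kappa g .$$
On $M^{2n+1}$, tracing this identity gives $\Delta_g f=-\frac{f R_g+(2n+1)\kappa}{2n}$, and feeding this expression back in — equivalently, simply passing to the traceless parts of both sides, since $(\Delta_g f)g$ and $\kappa g$ are pure trace — leaves precisely
$$\mathring{\nabla}^2 f=f\,\mathring{Ric},$$
which is (\ref{A1}). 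Hence the pair $(g,f)$ in the hypothesis of Corollary \ref{c3} solves (\ref{A1}).

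With that reduction in place I would invoke Theorem \ref{t2}: every hypothesis is met, since $M^{2n+1}(\varphi,\eta,\xi,g)$ is a non-Sasakian $(\kappa,\mu)$-contact metric manifold and $f$ is, by assumption, non-constant, hence non-trivial. As Theorem \ref{t2} is a local statement and imposes no boundary condition, it applies directly and yields that $M$ is locally flat for $n=1$ and locally isometric to $E^{n+1}\times S^n(4)$ for $n>1$, which is exactly the assertion of the corollary.

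I do not foresee a genuine obstacle here: the substantive curvature analysis (the derivative identities $\nabla_\xi h=\mu h\varphi$ and $(\nabla_\xi Q)X=\mu(2(n-1)+\mu)h\varphi X$, the contraction $RDf+2nD\psi=0$, the dichotomy $\mu=0$ or $\varphi^2 Df=0$, and the two ensuing case analyses that force $\kappa=0$ or else run into the contradiction $\kappa=n-\frac{1}{n}>1$) has already been carried out in the proof of Theorem \ref{t2}. The only points that deserve care are the traceless reduction above and checking that the non-constancy of $f$ matches the way Theorem \ref{t2} is applied; in fact the proof of Theorem \ref{t2} also disposes of the case in which $f$ turns out to be constant, so even this is not essential. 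As an alternative one could mimic the passage from Theorem \ref{t1} to Corollary \ref{c2}: by (\ref{a11}) the scalar curvature of a $(\kappa,\mu)$-contact metric manifold is the constant $R=2n(2n-2+\kappa-n\mu)$, so Proposition~6 of \cite{6} ties the $V$-static equation to equation type (\ref{1.2}); but the direct traceless argument is shorter and self-contained, so that is the version I would write out.
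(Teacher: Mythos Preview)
Your reduction is correct, and in fact your direct traceless argument is cleaner than the route the paper takes. The paper simply writes ``By similar arguments as in Corollary \ref{c2}'' before stating Corollary \ref{c3}; unpacking that reference, the paper's justification is precisely the alternative you sketch at the end: since by (\ref{a11}) the scalar curvature of a $(\kappa,\mu)$-contact metric manifold is constant, one appeals to Proposition~6 of \cite{6} to link the $V$-static equation with (\ref{1.2}) and then invokes Theorem~\ref{t2}. Your primary argument --- passing to traceless parts of $-(\Delta f)g+\nabla^2 f-f\,Ric=\kappa g$ to obtain $f\mathring{Ric}=\mathring{\nabla}^2 f$ directly --- bypasses the need for constant scalar curvature and the external citation, so it is both shorter and more self-contained, while arriving at the same reduction to Theorem~\ref{t2}.
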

Next, we construct an example of $(\kappa,\mu)$-contact metric manifold and $K$-contact manifold in which a non-trivial smooth function $f$ satisfies (\ref{A1}).
\begin{ex}
In \cite{G1}, the authors constructed a 3-dimensional $(1-\lambda^2,0)$-contact metric manifold $M=\{(x,y,z)\in\mathbb{R}^3:x\neq0\}$ in the Euclidean space $\mathbb{R}^3$, where $\lambda$ is a real number. Take $f(x,y,z)=x^2$, then one can easily see that the equations (\ref{2.1}) holds for $\psi=2(1-x^2)$ on $M$. In particular, taking $\lambda=0$ in the above example give a 3-dimensional $K$-contact manifold. Moreover, for $f(x,y,z)=x^2$ and  $\psi=2(1-x^2)$, the 3-dimensional $K$-contact manifold satisfies (\ref{A1}). The expression for Ricci operator is obtained as $QX=2X$, that is, $M$ is Einstein (see \cite{G1}) with constant scalar curvature $R=6$. Thus Theorem (\ref{t1}) is verified.
\end{ex}











\end{document}